\newtheorem{theorem}{Theorem}[section]
\newtheorem{proposition}{Proposition}[section]
\theoremstyle{definition}
\newtheorem{definition}[theorem]{Definition}
\theoremstyle{remark}
\numberwithin{equation}{section}
\begin{document}

\title[Two-Fluid Model of Electron-Positron Plasma]
{Collisionless Magnetic Reconnection in a Five-Moment
Two-Fluid Electron-Positron Plasma}

\author{E.A. Johnson}
\address{Department of Mathematics, University of Wisconsin, Madison WI 53717}
\email{ejohnson@math.wisc.edu}

\author{J.A. Rossmanith}
\address{Department of Mathematics, University of Wisconsin, Madison WI 53717}
\email{rossmani@math.wisc.edu}
\thanks{This work is supported in part by NSF Grant DMS-0711885
and a Graduate Fellowship from the Wisconsin Space Grant Consortium.}

\subjclass[2000]{Primary 65M60, 65Z05}
\date{November 30, 2008.}


\keywords{Magnetic reconnection, plasma physics, discontinuous Galerkin}

\begin{abstract}
We simulate magnetic reconnection in electron-positron (pair)
plasma using a collisionless two-fluid model with isotropic
pressure. In this model the resistive, Hall, and electrokinetic
pressure terms are absent from the curl of Ohm's law, leaving
the inertial term alone to provide for magnetic reconnection.
Our simulations suggest that for pair plasma simulated with
isotropic pressure fast reconnection does not occur without the
aid of sufficient (numerical) diffusion. We contrast this result
with simulations and published results showing fast reconnection
for collisionless two-fluid plasma with isotropic pressures
and \emph{non}-canceling mass-to-charge ratios, where Hall
effects are present and numerical diffusion is small, and with
published PIC studies of pair plasma which observe fast reconnection
and attribute it to nonisotropic pressure.

\end{abstract}

\maketitle

\section{Introduction}
An important issue of controversy in the magnetic reconnection
community is the minimal conditions required for
fast magnetic reconnection to occur in a plasma
and the minimal modeling requirements to resolve it  \cite{book:PrFo07}.
The first attempt to model reconnection was carried out
by Sweet \cite{article:Sweet58} and Parker \cite{article:Parker57}, who
used a resistive magnetohydrodynamic (MHD) model. 
Their approach was only successful in modeling a slow form of reconnection
and not the much faster reconnection that is observed in laboratory
and space plasma.

The Geospace Environmental Modeling (GEM) Reconnection Challenge
problem was introduced in \cite{article:GEM} and studied
with a variety of models to identify the essential physics
required to model collisionless magnetic reconnection. This
original GEM article concluded that all models that include
the Hall term in the generalized Ohm's law produced essentially
indistinguishable rates of reconnection.  The only other model
in their study that admitted fast reconnection was MHD with large anomalous
(e.g., current-dependent) resistivity, although as expected
it did not exhibit the quadrupole out-of-plane
magnetic field pattern that appears to characterize
models which incorporate the Hall term.

Since inclusion of Hall effects had been identified as the
critical ingredient to admit fast reconnection, Bessho and
Bhattacharjee
studied electron-positron
plasmas, for which the Hall term is
zero\cite{article:Bessho05,article:Bessho07}.
Their particle-in-cell (PIC) simulations
of the GEM problem (with the mass ratio reset from 25 to 1)
exhibited fast reconnection for temperature ratios of 5 and 1.
They found that the structure of the reconnection region still
shows an X-point, although as expected the out-of-plane magnetic
field does not show the quadrupole structure that appears in
models that incorporate Hall term effects.

%
%

This prompted us to ask if it is possible to get fast reconnection
if the Hall term is absent and the pressures are modeled as
isotropic.  Therefore we chose to study reconnection in
an electron-positron plasma using a two-fluid model with
isotropic pressure.


\section{Physical models}

\subsection{Particle-in-cell description.}
Plasma consists of charged particles interacting with the electromagnetic field.
In the absence of gravity and quantum-mechanical effects the
particles of a plasma satisfy Maxwell's equations and 
the Lorentz force to govern particle motion:
\def\B{\mathbf{B}}
\def\E{\mathbf{E}}
\def\J{\mathbf{J}}
\def\v{\mathbf{v}}
\def\x{\mathbf{x}}
\def\Div{\nabla\cdot}
\def\curl{\nabla\times}
\begin{align*}
   &\partial_t \B = -\curl \E,                 & \Div\B&=0,
\\ &\partial_t \E = c^2\curl B - \J/\epsilon_0, & \Div\E&=\sigma/\epsilon_0,
\\ &d_t \tilde\v_p = \frac{q_p}{m_p}
      \Big(\E(\x_p)+\v_p\times\B(\x_p)\Big),
   &d_t \x_p &= \v_p,
\\ &\J = \sum_p S_p(\x_p) q_p \v_p, 
   &\sigma &= \sum_p S_p(\x_p) q_p;
\end{align*}
here $\B$ is magnetic field,
$\E$ is electric field,
$c$ is the speed of light,
$\epsilon_0$ is electric permittivity,
$p$ is particle index,
$\x_p(t)$ is particle position,
$\tilde\v_p(t)=\gamma_p\v_p$ is (proper) particle velocity,
where $\gamma=\big(1-(v/c)^2\big)^{-1/2}\approx 1$ is the Lorentz factor,
$q_p$ is particle charge,
$m_p$ is particle mass,
$\sigma$ is charge density,
$\J$ is current density, and
$S_p(\x-\x_p)$ is particle charge distribution
(e.g., a unit impulse function).

Particle-in-cell (PIC) codes model plasma by attempting to evolve
in a mixed Eulerian-Lagrangian framework: particles are treated
in a Lagrangian way, while the electromagnetic field sits on an
Eulerian computational mesh.

\subsection{Boltzmann description.}
The Boltzmann model replaces the particles with particle (probability)
density functions $f_s(\x,\tilde\v,t)$
of space, velocity, and time, for each species $s$.
The Boltzmann equation asserts conservation (or balance) of particles
in phase space:
\begin{gather*}
  \partial_t f_s + \nabla_\x\cdot(\v f_s)
  + \frac{1}{r_g}\nabla_{\tilde\v}\cdot\Big(\frac{q_s}{m_s}(\E+\v\times\B)f_s\Big)=C_s;
\end{gather*}
here
$\tilde\v=\gamma\v\approx\v$ is (proper) velocity in phase space,
and $C_s$ is a collision
operator which is a function of $\{\tilde \v \mapsto f_p(t,\x,\tilde \v)\}_p$,
where $p$ ranges over all species.
The collisionless Boltzmann equation (alias {\em Vlasov equation})
asserts that $C_s=0$.
Maxwell's equations are coupled to the Boltzmann equation by
the relations
\begin{align*}
   \J &= \sum_s \int_\v f_s q_s \v, 
   &\sigma &= \sum_s \int_\v f_s q_s.
\end{align*}
\def\curl{\nabla\times}
\def\Div{\nabla\cdot}
\def\qdens{\sigma}
\def\E{\mathbf{E}}
\def\B{\mathbf{B}}
\def\J{\mathbf{J}}
\def\R{\mathbf{R}}
\def\q{\mathbf{q}}
\def\u{\mathbf{u}}
\def\w{\mathbf{w}}
\def\gasenergy{\mathcal{E}}
\def\totenergy{\mathcal{\tilde E}}
\def\Pressure{\mathbb{P}}

\subsection{Two-fluid model.}
Multiplying the Boltzmann equation by powers of velocity and
integrating over velocity space yields fluid equations.
Generic two-fluid equations for a two-species plasma are:
\begin{gather}
 \partial_{t}
    \begin{bmatrix}
      \rho_i \\
      \rho_e \\
      \rho_i\u_i \\
      \rho_e\u_e \\
      \gasenergy_i \\
      \gasenergy_e \\
    \end{bmatrix}
  +\Div
    \begin{bmatrix}
      \rho_i\u_i \\
      \rho_e\u_e \\
      \rho_i\u_i\u_i + \Pressure_i \\
      \rho_e\u_e\u_e + \Pressure_e \\
      \u_i\gasenergy_i+\u_i\cdot\Pressure_i+\q_i\\
      \u_e\gasenergy_e+\u_e\cdot\Pressure_e+\q_e\\
    \end{bmatrix}
 = \begin{bmatrix}
      0 \\
      0 \\
      \qdens_i\E+\J_i\times\B\\
      \qdens_e\E+\J_e\times\B\\
      \J_i\cdot\E\\
      \J_e\cdot\E\\
    \end{bmatrix}
  +
    \begin{bmatrix}
      0 \\
      0 \\
      \R_i\\
      \R_e\\
      \R_i\cdot\u_i + Q_{R,i} + Q_i \\
      \R_e\cdot\u_e + Q_{R,e} + Q_e \\
    \end{bmatrix},
    	\label{eqn:twoFluid}
 \\
 \partial_{t}
    \begin{bmatrix}
      (c\B) \\
      \E
    \end{bmatrix}
 + c 
    \begin{bmatrix}
      \curl\E \\
      -\curl(c\B)
    \end{bmatrix}
 = \begin{bmatrix}
      0 \\
      -\J/{\epsilon_0}
    \end{bmatrix}, \hbox{ and }
  \nabla\cdot
   \begin{bmatrix}
     (c\B) \\
     \E
   \end{bmatrix}
 = 
   \begin{bmatrix}
     0 \\
     \qdens/\epsilon_0
   \end{bmatrix}. \label{eqn:EMfields}
\end{gather}
The variables are defined as follows:
$i$ and $e$ are ion and electron species indices;
for species $s\in\{i,e\}$,
$q_s=\pm e$ is particle charge,
$m_s$ is particle mass,
$n_s$ is particle number density,
$\rho_s = m_s n_s$ is mass density,
$\sigma_s = q_s n_s$ is charge density,
$\J_s = \u_s\sigma_s$ is current density,
$\Pressure_s$ is the pressure tensor,
$\gasenergy_s$ is gas-dynamic energy,
$\q_s$ is the heat flux,
$\R_i=-\R_e$ denotes the interspecies drag force on the ions,
$Q_{R,s}$ denotes heating due to friction (drag), and
$Q_i=-Q_e$ denotes the interspecies thermal heat transfer to the ions.


\subsection{Collisionless isotropic closure.}
\def\idtens{\mathbb{I}}
To close the system we must posit constitutive relations
for the nonevolved quantities.
In a collisionless model we neglect
the terms that come from the collision operator:
$\q_s$, $\R_s$, $Q_{R,s}$, and $Q_s$.
In an isotropic model we assume that the pressure
tensor is a scalar pressure times the identity tensor:
$\Pressure_s=p_s\idtens$; this leads to the constitutive
relation $\gasenergy_s=(3/2)p_s+\rho_s u_s^2/2$.
\footnote{ We remark that our collisionless model with isotropic pressure
seems not to correspond to any general physical regime of plasma
(although it may apply to particular configurations).
A model is considered to be physical if it agrees
with a physical regime in some physical limit.
We assume an isotropic pressure tensor yet no resistivity, but 
for an electron-positron plasma the
time scale over which particles thermalize
is the same as the time scale over which resistive drag force
seeks to equilibrate the velocities of the two species.
}

\subsection{Ohm's law.}
Multiplying the momentum equations of each species by
its charge to mass ratio and summing gives a balance law
for net current.
Invoking the assumption of quasineutrality ($\sigma\approx 0$)
and solving this law for electric field
gives the generalized Ohm's law,
\def\mti{\tilde m_i}
\def\mte{\tilde m_e}
\def\mtr{\tilde m_r}
\begin{gather*}
   \E = \B\times\u + \E',
\end{gather*}
where $\u$ is the mass-averaged fluid velocity and
where the electric field in the frame of
reference of the fluid is the sum of four terms:
\begin{alignat*}{4}
   \E' = &\phantom{+}\eta\cdot\J
           &&\text{ (resistance)}
    \\  &+ \frac{\mti-\mte}{\rho}\J\times\B
           &&\text{ (Hall term)}
  \\ &+ \frac{1}{\rho}\Div( \mte\Pressure_i - \mti\Pressure_e)
           &&\text{ (pressure term)}
    \\ &+ \frac{\mti\mte}{\rho}
         \Big(\partial_{t} \J +\Div\big(
            \u\J + \J\u + \frac{\mte-\mti}{\rho}\J\J\big)\Big)
        && \text{ (inertial term)}.
\end{alignat*}
Here we adopt the convenction that $\tilde m_s:=m_s/e$,
and we have assumed that
the electrical resistance $\R_i\frac{\mti \mte}{\mti+\mte}$
equals $\eta\cdot\J$ (where $\eta$ is the {\em resistivity}),
a simple function of the drift velocity, i.e., of the current.

Ideal MHD assumes that $\E'=0$, and resistive MHD
assumes that $\E'=\eta\cdot\J$.


Substituting Ohm's law into Faraday's law
$\partial_t \B + \curl\E=0$ yields
$\partial_t \B + \Div(\u\B+\B\u) = \curl\E'$,
which implies that the flux of $\B$ through a surface
convected by $\u$ can only change if the curl of
$\E'$ is nonzero.
\footnote{
We remark that if there exists a velocity field
$\v$ for which $\partial_t \B + \curl(\B\times\v)=0$,
then magnetic flux is convected by $\v$ and the topology
of magnetic field lines cannot change.  In particular,
if we merely add the Hall term to the ideal Ohm's law, then
$\partial_t \B + \curl(\B\times(\u+\frac{\mte-\mti}{\rho}\J\times\B))$,
i.e., the magnetic field is essentially carried by the electrons.
Hall-mediated fast reconnection requires a small amount of resistivity
as well.
}

In an electron-positron plasma the masses of ions and electrons
are identical and the Hall term vanishes.
Also, if pressure is isotropic and density varies slowly,
then the curl of the pressure term is zero.
In the collisionless two-fluid model the resistivity is zero.
So for our two-fluid model reconnection could only happen 
by means of the inertial term (or numerical diffusion).

\section{The five-moment two-fluid model}

The collisionless two-fluid equations we solved were
\def\wh{}
\def\pressure{p}
\def\E{\mathbf{E}}
\def\B{\mathbf{B}}
\def\J{\mathbf{J}}
\def\Div{\nabla\cdot}
\def\curl{\nabla\times}
\def\debyeLength{\lambda_D}
\def\gyrofrequency{\omega_g}
\def\mdens{\rho}
\def\u{\mathbf{u}}
\def\gasenergy{\mathcal{E}}
\def\idtens{\mathbb{I}}  
\def\qdens{\sigma}
\def\cc{{\chi}}
\def\diminished{\color{cyan}}
\def\emphasized{\color{blue}}
\begin{gather*}
 \partial_{\wh t}
    \begin{bmatrix}
      \wh\mdens_i \\
      \wh\rho_i\wh\u_i \\
      \wh\gasenergy_i \\
      \wh\mdens_e \\
      \wh\rho_e\wh\u_e \\
      \wh\gasenergy_e \\
    \end{bmatrix}
   +
   \wh \Div
    \begin{bmatrix}
      \wh\rho_i\wh\u_i \\
      \wh\rho_i\wh\u_i\wh\u_i + \wh\pressure_i \, \idtens \\
      \wh\u_i\bigl(\wh\gasenergy_i+\wh\pressure_i\bigr)\\
      \wh\rho_e\wh\u_e \\
      \wh\rho_e\wh\u_e\wh\u_e + \wh\pressure_e \, \idtens \\
      \wh\u_e\bigl(\wh\gasenergy_e+\wh\pressure_e\bigr)\\
    \end{bmatrix}
 = 
    \begin{bmatrix}
      0 \\
      \wh\qdens_i(\wh\E+\wh\u_i\times\wh\B)\\
      \wh\qdens_i\wh\u_i\cdot\wh\E \\
      0 \\
      \wh\qdens_e(\wh\E+\wh\u_e\times\wh\B)\\
      \wh\qdens_e\wh\u_e\cdot\wh\E \\
    \end{bmatrix},
 \\
 \partial_{\wh t}
    \begin{bmatrix}
      \wh\B \\
      \wh\E \\
    \end{bmatrix}
 + \begin{bmatrix}
      \wh\curl\wh\E{\diminished +\cc\nabla\psi} \\
      -c^2\wh\curl \wh\B {\diminished + \cc c^2\nabla\phi} \\
    \end{bmatrix}
 = \begin{bmatrix}
      0 \\
      -\wh\J/{\wh\epsilon} \\
    \end{bmatrix}, 
 {\diminished
 \partial_{\wh t}
    \begin{bmatrix}
      {\diminished\psi} \\
      {\diminished \phi} \\
    \end{bmatrix}
 + 
 }
    \begin{bmatrix}
       {\diminished \cc c^2{\emphasized \wh \Div\wh\B}} \\
       {\diminished \cc{\emphasized \wh\Div\wh\E} } \\
    \end{bmatrix}
 = \begin{bmatrix}
      {\emphasized 0} \\
      {\diminished \wh \cc{\emphasized \wh\qdens/\wh\epsilon}} \\
    \end{bmatrix}.
\end{gather*}
These equations were studied extensively by
Shumlak and Loverich \cite{article:ShLo03}
and Hakim, Shumlak, and Loverich \cite{article:HaLoSh06}.
A version of this model with anisotropic pressure was
also considered by Hakim \cite{article:Hakim07}.

This system is identical in apearance
with the two-fluid system \eqref{eqn:twoFluid}--\eqref{eqn:EMfields}
if the correction potentials $\psi$ and $\phi$,
which we have added for numerical divergence
cleaning purposes, are zero.
These equations imply a wave equation that propagates
the divergence constraint error at the speed $c\chi$.\footnote{
To see this first take the divergence of Maxwell's evolutions
equations.  Then either (1) take the time derivative of Maxwell's
constraint equations to eliminate the electromagnetic field
and get a wave equation for the correction potentials,
or (2) eliminate the correction potentials by taking
the time derivative of the divergence of Maxwell's
evolution equations and the Laplacian of the constraint
equations to eliminate the correction potentials and get
a wave equation for the divergence constraint error.
}
We select $\chi=1.05$.

We nondimensionalized this system by choosing typical
values of magnetic field $B_0$, (ion) number density $n_0$,
particle charge $q_0=e$, and combined particle mass $m_0=m_i+m_e$.
This implies a choice (1) of characteristic time scale
$\omega_g^{-1}:=\frac{m_0}{q_0 B_0}$, where $\omega_g$ is the
cyclotron frequency of a typical particle, (2) of characteristic
velocity $v_A:=\frac{B_0}{\sqrt{\mu_0 m_0 n_0}}$, a typical
Alfv\'en speed, where $\mu_0:=(c^2\epsilon_0)^{-1}$ is the
permeability of magnetic field and (3) straightforwardly
of all other quantities. Replacing every quantity $X$ with
nondimensional representation $\hat X X_0$ gives a system
governing the $\hat X$ values with exactly the same appearance,
except that $1/\hat\epsilon$ = $\hat c^2$.  We drop hats.

%

\section{GEM magnetic reconnection challenge problem}

\def\GEM{\mathrm{GEM}}
With the exception that our nondimensionalized
light speed is 10 rather than their value of 20,
our settings are equivalent to those of \cite{article:Bessho05},
which reflect the settings and conventions of the
original GEM problem \cite{article:GEM}.
To map our SI-like nondimensionalization
onto their Gaussian-like nondimensionalization,
rescale the electromagnetic field by $\B_\GEM = \sqrt{4\pi}\B$
and $\E_\GEM = \frac{\sqrt{4\pi}}{c}\E$.

\subsection{Computational domain.}
The computational domain is the rectangular domain
$[-L_x/2,L_x/2]\times[-L_y/2,L_y/2]$,
where $L_x=8\pi$ and $L_y=4\pi$.
The problem is symmetric under reflection across
either the horizontal or vertical axis.

\subsection{Boundary conditions.}
The domain is periodic in the $x$-axis.
The boundaries perpendicular to the $y$-axis
are thermally insulating conducting wall boundaries.  
A conducting wall boundary is a solid wall boundary
(with slip boundary conditions in the case of ideal plasma)
for the fluid variables, and the electric field at
the boundary has no component parallel to the boundary.
We also assume that magnetic field runs parallel to
and so does not penetrate the boundary (this follows
from Ohm's law of ideal MHD, but we assume it holds
generally).\footnote{We remark that with the symmetries of the
GEM problem at the conducting wall boundary may also be regarded
as a symmetry conditions for a periodic boundary if the solution
on the entire domain is reflected across its bottom boundary
and negated, allowing the GEM problem to be solved on a doubled
domain with periodic boundaries and infinitely smooth initial
conditions.}

\subsection{Model Parameters.}
We carried out simulations for the following choices
of the GEM model parameters:
\begin{align*}
   (1) &&m_i/m_e &= 25, &T_i/T_e&=5  && \hbox{ (original GEM),}
\\ (2) &&m_i/m_e &= 1 , &T_i/T_e&=5, && \hbox{ and}
\\ (3) &&m_i/m_e &= 1 , &T_i/T_e&=1.
\end{align*}

\subsection{Initial conditions.}
The initial conditions are a perturbed Harris sheet equilibrium.
The unperturbed equilibrium is given by
\def\e{\mathbf{e}}
\def\E{\mathbf{E}}
\def\J{\mathbf{J}}
\def\sech{\,\mathrm{sech}}
\begin{align*}
    \B(y) & =B_0\tanh(y/\lambda)\e_x,
  & p(y) &= \frac{B_0^2}{2 n_0} n(y),
 \\ n_i(y) &= n_e(y)
            = n_0(1/5+\sech^2(y/\lambda)),
  & p_e(y) &= \frac{T_e}{T_i+T_e}p(y),
 \\ \E & =0,
  & p_i(y) &= \frac{T_i}{T_i+T_e}p(y).
\end{align*}
On top of this the magnetic field is perturbed by
\begin{align*}
   \delta\B&=-\e_z\times\nabla(\psi), \hbox{ where}
\\ \psi(x,y)&=\psi_0 \cos(2\pi x/L_x) \cos(\pi y/L_y).
\end{align*}
In the GEM problem the initial condition constants are
\begin{align*}
    \lambda&=0.5,
  & B_0&=1,
  & n_0&=1,
  & \psi_0&=B_0/10.
\end{align*}

\section{Properties of the GEM problem}

\subsection{Reconnected flux.}

In defining and discussing magnetic flux we restrict
ourselves to the first quadrant of the domain,
referring to it as if it were the entire domain.

The ideal MHD model implies the frozen-in flux condition, which
says that magnetic flux is convected with the fluid. The boundary
conditions and symmetries of the problem dictate that at the
boundaries fluid can only move parallel to the boundaries and
that the in-plane fluid velocity at the corners must be zero.
Therefore, the frozen-in flux condition would say that the flux
through any boundary must remain constant.

In fact, for models which permit reconnection,
the frozen-in flux condition breaks down near the X-point,
allowing magnetic field to diffuse and allowing field lines
to break and reconnect so that they pass through the horizontal axis.
We therefore define magnetic reconnection to be the loss
of magnetic flux through the vertical axis
into the first quadrant:
\def\leftFlux{F_\mathrm{left}} 
\def\reconFlux{F_\mathrm{recon}} 
\def\max{\mathrm{max}}
\begin{definition}
The reconnected flux $\reconFlux$ is defined by
\begin{align*}
   \leftFlux(t) &:= \int_0^{y_\max} B_1\,dy,
  &\reconFlux(t) &:= \leftFlux(0) - \leftFlux(t).
\end{align*}
\end{definition}

\begin{proposition}
The rate of reconnection is minus the value of the out-of-plane
component of the electric field at the origin (i.e.\ the X-point).
\footnote{
This confirms the theoretical fact that
an MHD model which only includes the $\B\times\u$
and Hall terms in Ohm's law cannot give fast reconnection,
since both these terms must vanish at the origin.
}
\end{proposition}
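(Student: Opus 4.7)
The plan is to differentiate the definition of $\reconFlux$ with respect to time and rewrite the integrand using Faraday's law, so that the result reduces to a boundary evaluation of $E_3$ along the vertical axis.

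First I would write
\begin{gather*}
   \frac{d}{dt}\reconFlux(t) \;=\; -\frac{d}{dt}\leftFlux(t)
     \;=\; -\int_0^{y_{\max}} \partial_t B_1\bigr|_{x=0}\, dy.
\end{gather*}
Next, I would invoke Faraday's law $\partial_t\B = -\curl\E$ from the evolution equations \eqref{eqn:EMfields}, which gives $\partial_t B_1 = \partial_3 E_2 - \partial_2 E_3$. Because the GEM configuration is two-dimensional (no $z$-dependence), $\partial_3 E_2 = 0$, so $\partial_t B_1 = -\partial_y E_3$. Substituting yields
\begin{gather*}
   \frac{d}{dt}\reconFlux(t)
     \;=\; \int_0^{y_{\max}} \partial_y E_3\bigr|_{x=0}\, dy
     \;=\; E_3(0,y_{\max},t) - E_3(0,0,t).
\end{gather*}

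The remaining step is to show that the contribution at $y=y_{\max}$ vanishes. This follows from the conducting wall boundary condition described in the boundary conditions section: at the top wall $\E$ has no component parallel to the boundary, so in particular the out-of-plane component $E_3$ must vanish there. Hence $E_3(0,y_{\max},t)=0$ and
\begin{gather*}
   \frac{d}{dt}\reconFlux(t) \;=\; -E_3(0,0,t),
\end{gather*}
which is the claimed identity.

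The routine part is the differentiation under the integral sign and the application of Faraday's law; the only subtlety is justifying the vanishing of $E_3$ at the endpoint $y=y_{\max}$ on the vertical axis. This is the main (and only) obstacle, but it is immediate from the stated conducting wall boundary condition (and is also consistent with the reflection symmetry across the vertical axis, which forces the tangential part of $\E$ on that axis to behave appropriately at the boundary intersection). Once that boundary value is accounted for, the result is a clean application of Stokes'/fundamental theorem of calculus.
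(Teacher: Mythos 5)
Your proposal is correct and follows essentially the same route as the paper's own proof: differentiate the flux integral, convert $\partial_t B_1$ to $-\partial_y E_3$ via Faraday's law in the two-dimensional setting, and kill the boundary term at $y_{\max}$ using the conducting-wall condition that the tangential (out-of-plane) electric field vanishes there. You simply spell out the sign bookkeeping and the boundary justification that the paper leaves implicit.
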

\begin{proof}
\begin{align*}
   d_t \reconFlux(t) &= - d_t \leftFlux(t)
                     &= - \int_0^{y_\max} \partial_t B_1\,dy
                     &=   \int_0^{y_\max} \partial_y E_3\,dy
                     &= - E_3(0),
\end{align*}
since $E_3$ is zero at the conducting wall.
\end{proof}


\subsection{Reflectional symmetries.}

The GEM problem has reflectional symmetry across the
horizontal and vertical axes.  We impose this symmetry
by restricting our computations to the first quadrant.
\footnote{
In simulations which incorporate the entire domain
symmetry is often lost due to computational noise and
the inherent instability of the problem.
}
At the origin, symmetries across the horizontal and vertical
axes mean that vectors have only an out-of-plane component
and pseudovectors must be zero.

\subsection{Reductions in Ohm's law at the origin.}

Symmetries at the origin reduce Ohm's law to:
\begin{alignat*}{3}
   \E_3 = &\eta J_3
     + \frac{1}{\rho}
        (\mte(\partial_{x_1}\Pressure_{i,1,3}+\partial_{x_2}\Pressure_{i,2,3})
       - \mti(\partial_{x_1}\Pressure_{e,1,3}+\partial_{x_2}\Pressure_{e,2,3}))
 \\            &+ \frac{\mti\mte}{\rho} \Big(\partial_{t} \J_3  
           + J_3\Div\u + u_3\Div\J + \frac{\mte-\mti}{\rho} J_3\Div\J\Big)
\end{alignat*}
If we neglect resistivity, off-diagonal pressure components,
and assume that the flow of charge and mass toward or away from the
origin is small, then this becomes
\begin{gather*}
   -d_t \reconFlux = \E_3 \approx \frac{\mti\mte}{\rho} \partial_t \J_3 \approx \mti\mte\partial_t(\J_3/\rho),
\end{gather*}
i.e., the value of $-\mti\mte \J_3/\rho$ at the origin will tend to track
with $\reconFlux$. 

%

\section{Results}

Following the precedent of \cite{article:ShLo03},
\cite{article:Hakim07}, and
\cite{article:HaLoSh06},
we carried out simulations of the GEM problem 
using a third-order shock-capturing
Runge-Kutta Discontinuous Galerkin solver for
a collisionless two-fluid model with isotropic pressure
for each species.  (We enforced the divergence cleaning
for the magnetic field but not for the electric field.)
We carried out simulations on a quarter domain
(hence enforcing symmetry) for mesh sizes of
$32\times 16$,
$64\times 32$, and
$128\times 64$.

For the low mesh resolution we seemed to observe fast reconnection
in the electron-positron plasma (based on the pattern of
magnetic field lines and the rates of reconnection), but not for high resolutions.
We hypothesize that the electron inertial term coupled with
sufficient (numerical) resistivity is sufficient to yield
fast reconnection.  In future studies we hope to explicitly introduce
collisional (diffusive) terms (rather than relying on numerical
diffusion) and explore whether we can show convergence to
fast reconnection for resistive isotropic two-fluid plasma.

We plotted reconnected flux, $-\mti\mte J_3/\rho$, and a crude
calculation of -$\int_0^t E_3$ (the cumulative sum of $-E_3$
evaluated at integer times) for all three combinations of mass
ratio and temperature ratio.
Our plots indicate that
$-\mti\mte J_3/\rho$ only tracked with reconnected flux for the
high-resolution simulations of electron-positron plasma up until
the time when a large magnetic island formed around the origin.
(These were the plots where fast reconnection did not occur.)

\begin{figure}[t!]
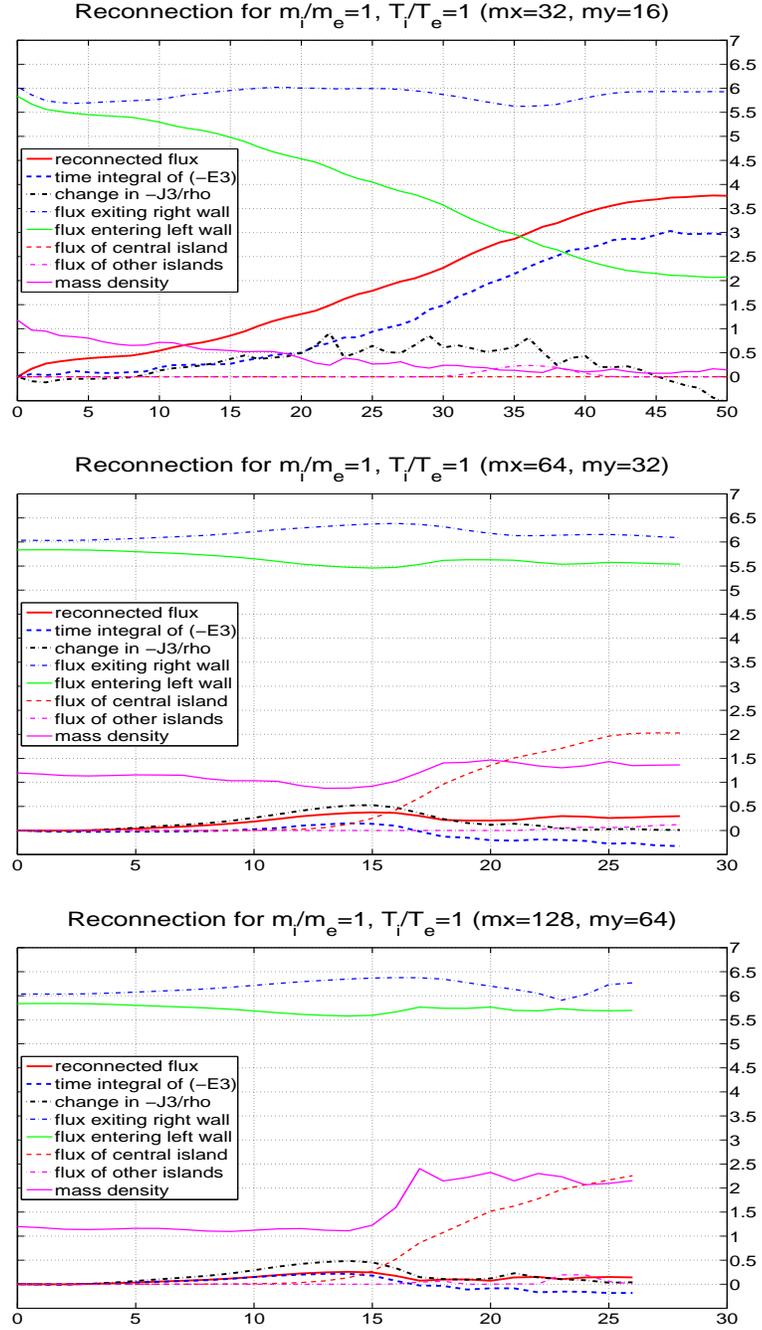

  \includegraphics[width=150mm, height=60mm]{figure1.eps}
  \\
  \includegraphics[width=150mm, height=60mm]{figure2.eps}
  \\
  \includegraphics[width=150mm, height=60mm]{figure3.eps}
  \caption{\small Three plots attempting to show convergence 
    of reconnection data for electron-positron plasma with
    equal initial temperatures.  Reconnection is suppressed for 
    finer mesh resolution but fast reconnection occurs for coarse
    mesh resolution.  We conjecture that numerical diffusion
    coupled with the ion inertial term is sufficient to admit
    fast reconnection.  The appearance of a large central magnetic
    island beginning around time 15 is curious.
    }
  \label{f:plot_1_1}
\end{figure}
\begin{figure}[t!]
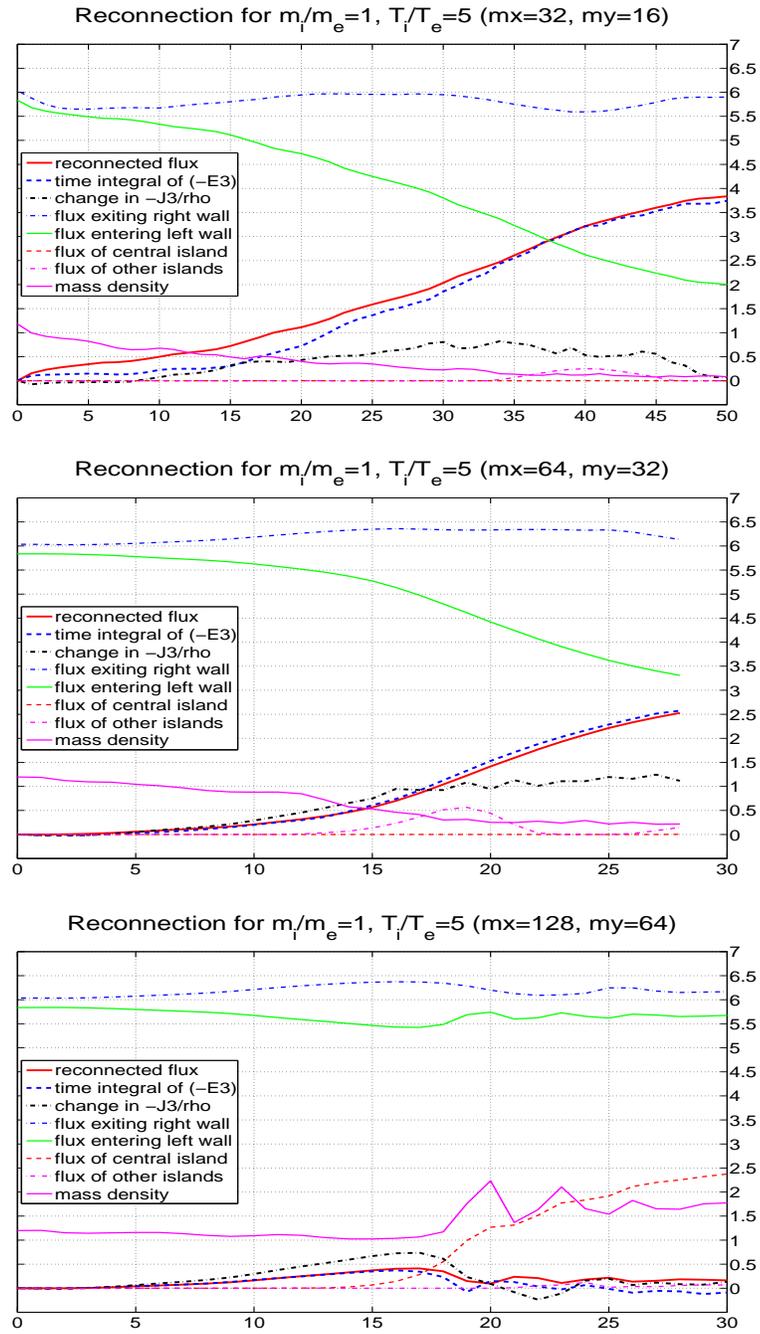

  \includegraphics[width=150mm, height=60mm]{figure4.eps}
  \\
  \includegraphics[width=150mm, height=60mm]{figure5.eps}
  \\
  \includegraphics[width=150mm, height=60mm]{figure6.eps}
  \caption{\small Convergence study
    of reconnection for electron-positron plasma with initial
    temperature ratio of 5.  The reconnection rate would appear to
    decrease with decreasing numerical diffusion.  Only for the
    finest mesh does a central magnetic island form.
    }
  \label{f:plot_1_5}
\end{figure}
\begin{figure}[t!]
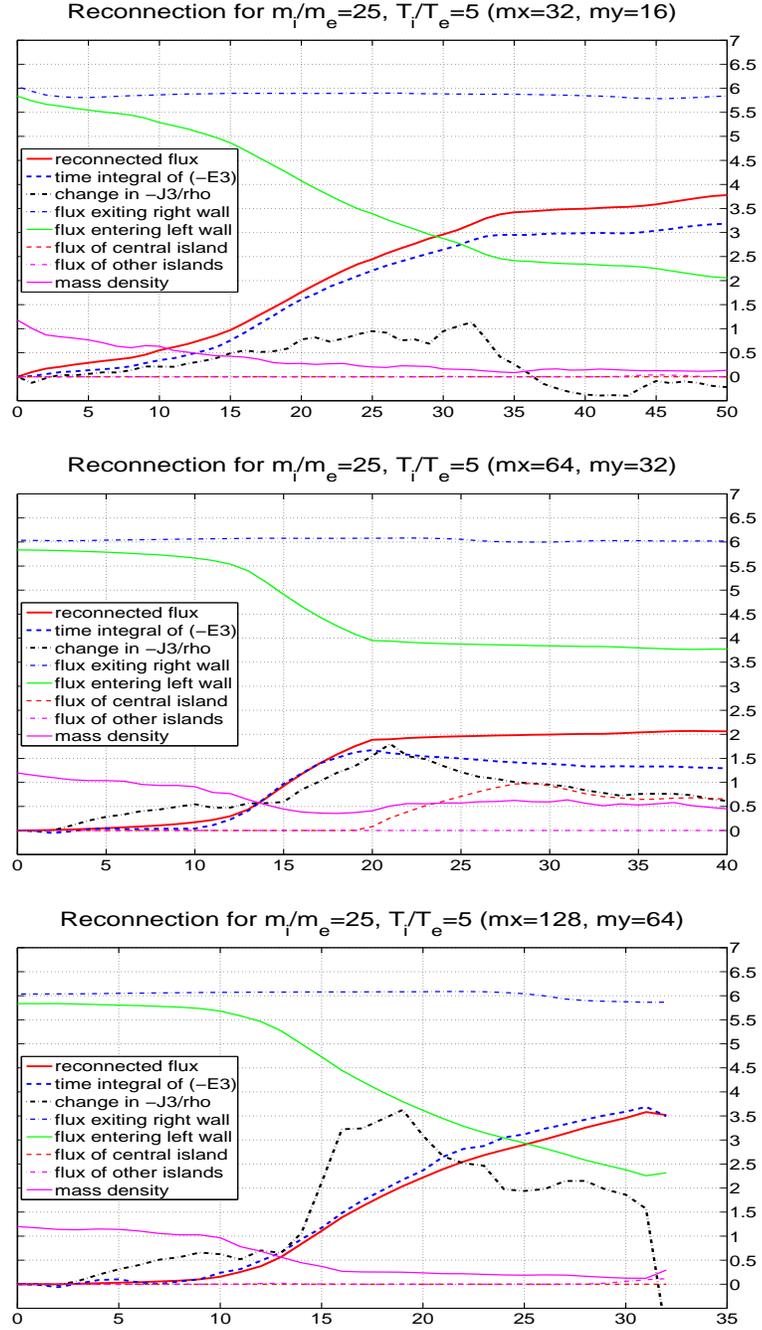

  \includegraphics[width=150mm, height=60mm]{figure7.eps}
  \\
  \includegraphics[width=150mm, height=60mm]{figure8.eps}
  \\
  \includegraphics[width=150mm, height=60mm]{figure9.eps}
  \caption{\small Convergence study
    of reconnection for electron-positron plasma with mass ratio of 25
    and initial temperature ratio of 5.  Fast reconnection occurs for
    both coarse and fine mesh resolutions.  The anomalously early 
    cessation of reconnection in the simulation at
    intermediate resolution is evidently related to the appearance of
    a large central magnetic island beginning around time 20.
    }
  \label{f:plot_25_5}
\end{figure}


\end{document}